\newtheorem{thm}{Theorem}[section]
\newtheorem{lem}[thm]{Lemma}
\theoremstyle{definition}
\numberwithin{equation}{section}
\titleformat{\section}{\normalfont\bfseries\centering}{\thesection.}{.25em}{}
\titleformat{\subsection}{\normalfont\bfseries}{\thesubsection.}{.25em}{}
\newcommand{\R}{\ensuremath{\mathbb R}}    
\newcommand{\C}{\ensuremath{\mathbb C}}    
\newcommand{\N}{\ensuremath{\mathbb N}}    
\newcommand{\Z}{\ensuremath{\mathbb Z}}    
\newcommand{\T}{\mathbb{T}}                
         \newcommand{\frakP}{\mathfrak P}
\newcommand{\vphi}{\varphi}
\newcommand{\vek}[2]
{
   \begin{pmatrix}
      #1\\
      #2
   \end{pmatrix}
}
\renewcommand{\Im}{\operatorname{Im}}
\renewcommand{\Re}{\operatorname{Re}}
\newcommand{\Lra}{\Longrightarrow}
\newcommand{\ol}{\overline}
\newcommand{\rmref}[1]{{\rm\ref{#1}}}
\begin{document}
\title[Phase Retrieval from 4{\it N}--4 Measurements]{Phase Retrieval from 4{\it N}--4 Measurements}  

\author{Friedrich Philipp}
\address{Institut f\"ur Mathematik, Technische Universit\"at Berlin, Stra\ss e des 17.\ Juni 136, 10623 Berlin, Germany}
\email{philipp@math.tu-berlin.de}
\urladdr{www.tu-berlin.de/?fmphilipp}

\begin{abstract}
We prove by means of elementary methods that phase retrieval of complex polynomials $p$ of degree less than $N$ is possible with $4N-4$ phaseless Fourier measurements of $p$ and $p'$. In addition, we provide an associated algorithm and prove that it recovers $p$ up to global phase.
\end{abstract}



\maketitle
\thispagestyle{empty}

\section{Introduction}
Phase retrieval is the recovery (up to a global phase factor) of signals from intensity measurements, i.e.\ from the absolute values of (scalar) linear measurements of the signal. This particularly challenging task is motivated by applications in, e.g., X-ray crystallography \cite{fi1,m}, optical design \cite{fa}, and quantum mechanics \cite{c,j,lr}. In many applications it is usually the Fourier transform of the signal from which the phase gets lost. While in practice one tries to overcome the resulting under-determination by exploiting a priori information on the phase, mathematicians and engineers are also interested in the question whether phase retrieval (without prior knowledge) is possible when the number of measurements is increased. In fact, there are several kinds of phase retrieval problems. Among them are the following two:

\begin{itemize}
\item[(a)] Which systems of vectors allow for phase retrieval of ``generic'' signals and how large are these?
\item[(b)] Which systems of vectors allow for phase retrieval of {\em every} signal and how large are these?
\end{itemize}

Here, we restrict ourselves to the finite-dimensional complex case\footnote{For phase retrieval in $\R^N$, question (b) has been answered completely in \cite{bce}.}, i.e., phase retrieval in $\C^N$, where $N\ge 2$. While for (a) at least the second question has been answered (the answer is $2N$, cf.\ \cite{fi2}), the questions in (b) have remained unanswered to date. However, there are results which give us an idea of possible answers. Let us briefly recall the history of problem (b). In 2004, it has been shown in \cite{fi2} that systems as in (b) must contain at least $3N-2$ vectors. In 2011 (Arxiv version), this lower bound could be improved to $4N - 2\alpha - 3$, where $\alpha$ denotes the number of ones in the binary representation of $N-1$, see \cite{hmw}. In \cite{bce} the authors proved that a ``generic'' system consisting of $4N-2$ vectors allows for phase retrieval of every signal in $\C^N$. Moreover, it was shown in \cite{bcmn} (see also \cite{bce}) that a system as in (b) must necessarily have the so-called complement property, meaning that if some subsystem is not spanning $\C^N$, then its complement is.

Recently, it has been conjectured in \cite{bcmn} (the ($4N$--\,$4$)-Conjecture) that phase retrieval is never possible with less than $4N-4$ measurements, and that ``generic'' measurement systems consisting of $4N-4$ vectors allow for phase retrieval. This would correspond to the real case where this statement holds true with the lower bound $2N-1$ (cf.\ \cite{bce}). And indeed, the second part of the ($4N$--\,$4$)-Conjecture has very recently been proved by means of algebraic geometry in \cite{cehv}. The first part is true for the dimensions $N=2$ and $N=3$, cf.\ \cite{bcmn}.

However, in order to understand the full picture, one seeks for {\it structured} measurement systems with $4N-4$ vectors allowing for phase retrieval. The first example of this kind was given by Bodmann and Hammen in \cite{bh}. They constructed a system of $4N-4$ vectors in the $N$-dimensional linear space $\frakP_N$ of complex polynomials of degree less than $N$ and proved that it allows for phase retrieval, cf.\ \cite[Theorem 2.3]{bh}. Since the second part of the ($4N$--\,$4$)-Conjecture was not proved when Bodmann and Hammen published their work, their result was the first showing that phase retrieval with $4N-4$ measurements is possible. Another measurement ensemble consisting of $4N-4$ vectors and allowing for phase retrieval was provided in \cite{fmnw}. This system consists of the magnitudes of DCT-like measurements of the signal and a modulated version of it. In their proof, the authors make heavily use of the so-called circular autocorrelation. A recovery algorithm is presented as well.

In the present contribution, we will prove a variant of the main theorem in \cite{bh}. The measurements in \cite{bh} are in fact intensities of polynomial evaluations at points on the unit circle $\T$ and on another circle intersecting $\T$. Our main theorem is the following.

\begin{thm}\label{t:main}
Let $p$ be a polynomial with complex coefficients of degree at most $N-1$, and let
$$
w_1,\ldots,w_{2N-1}\in\T
\qquad\text{as well as}\qquad
z_1,\ldots,z_{2N-3}\in\T
$$
be mutually distinct points on the unit circle, respectively. Then the $4N-4$ intensity measurements
\begin{equation}\label{e:measurements}
|p(w_j)|,\;j=1,\ldots,2N-1,
\qquad\text{and}\qquad
|p'(z_k)|,\;k=1,\ldots,2N-3,
\end{equation}
determine $p$ uniquely, up to a global phase factor.
\end{thm}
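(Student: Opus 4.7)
My plan has two stages: first, lift the $4N{-}4$ discrete measurements to full knowledge of $|p|^2$ and $|p'|^2$ on all of $\T$; second, show that this continuous data determines $p$ up to a global phase.

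For the lifting, $f(z):=z^{N-1}p(z)\overline{p}(1/z)$ (where $\overline{p}$ denotes the polynomial with conjugated coefficients) is a polynomial in $z$ of degree at most $2N-2$, and $f(w)=w^{N-1}|p(w)|^2$ for $w\in\T$. Lagrange interpolation at the $2N-1$ distinct points $w_j$ therefore reconstructs $f$, and hence $|p|^2$, on all of $\T$. Analogously, $g(z):=z^{N-2}p'(z)\overline{p'}(1/z)$ is a polynomial of degree at most $2N-4$, reconstructed from the $2N-3$ values of $|p'|^2$ at the $z_k$.

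For the second stage, suppose $p,\tilde p\in\frakP_N$ satisfy $|p|=|\tilde p|$ and $|p'|=|\tilde p'|$ on $\T$. Since zeros on $\T$ must match with multiplicity, the quotient $r(z):=\tilde p(z)/p(z)$ is regular on $\T$ with $|r|\equiv 1$ there; by Schwarz reflection, $r$ is a unimodular constant times a finite Blaschke product. Define $\psi(z):=zr'(z)/r(z)$; the condition $|r|=1$ on $\T$ forces $\psi$ to be real-valued on $\T$ and $|r'(z)|=|\psi(z)|$ there. Substituting $\tilde p'=r'p+rp'$, the equation $|\tilde p'|^2=|p'|^2$ on $\T$ collapses to
\[
\psi(z)\bigl[\psi(z)\,|p(z)|^2+2\,\Re\!\left(zp'(z)\overline{p(z)}\right)\bigr]=0,\qquad z\in\T.
\]
If $\psi\equiv 0$, then $r$ is a unimodular constant and $\tilde p=e^{i\theta}p$, as desired. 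Otherwise, continuity forces the bracketed factor to vanish on all of $\T$, and dividing by $|p|^2$ (off the finitely many zeros of $p$ on $\T$) yields $\psi(z)=-2\,\Re(zp'(z)/p(z))$ on $\T$. In terms of $h:=zp'/p$ and $\tilde h:=z\tilde p'/\tilde p$, and using $\overline{z}=1/z$ on $\T$ to interpret $\overline{h(z)}$ as a rational function in $z$, this is the global identity $\tilde h(z)=-\overline{h(z)}$. A partial-fraction computation — $h(z)=\deg p+\sum_k\alpha_k/(z-\alpha_k)$ summed over nonzero roots of $p$ — shows that the two sides approach $\deg\tilde p$ and $-m$ as $z\to\infty$, where $m$ is the order of vanishing of $p$ at $0$. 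This forces $\deg\tilde p=m=0$, so $p$ and $\tilde p$ are both constants and $\psi\equiv 0$, contradicting the standing assumption.

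I expect the main technical difficulty to be deriving the displayed factorization — in particular, verifying the reality of $\psi$ on $\T$ and the identity $|r'|=|\psi|$ there, which underlie the dichotomy between $\psi\equiv 0$ and the global polynomial identity $\tilde h=-\overline{h}$ — together with carrying out the asymptotic analysis at $z=\infty$, where a potential zero of $p$ at the origin must be tracked carefully.
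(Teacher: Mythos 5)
Your proposal is correct, but the heart of your argument --- the uniqueness step --- follows a genuinely different route from the paper's. The interpolation stage is the same idea as the paper's Lemma \ref{l:interpolation1} (you phrase it with algebraic polynomials and Lagrange interpolation, the paper with trigonometric polynomials). For uniqueness, the paper never forms a quotient: it sets $g_p(z)=zp'(z)\ol{p(z)}$ and $g_q(z)=zq'(z)\ol{q(z)}$, notes that $\tfrac{d}{dt}|p(e^{it})|^2=-2\Im g_p(e^{it})$ forces $\Im g_p=\Im g_q$ on $\T$, and feeds $|\Re g_p|=|\Re g_q|$ into a lemma on real trigonometric polynomials (Lemma \ref{l:real}) to obtain the dichotomy $g_q=g_p$ or $g_q=-\ol{g_p}$ on $\T$; the first branch gives $p'q=q'p$ and hence linear dependence, while the second is eliminated by comparing zeroth Fourier coefficients, which yields $\sum_{j\ge 1}j|\beta_j|^2=-\sum_{j\ge 1}j|\alpha_j|^2$ and so forces both polynomials to be constant. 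Your dichotomy is exactly parallel: your case $\psi\equiv 0$ corresponds to the paper's first branch, and your bracket identity, rewritten via $h=zp'/p$, is precisely the paper's second-branch relation $\tilde h=-\ol{h}$ --- but you reach it through the unimodular quotient $r=\tilde p/p$ and dispose of it by evaluating rational functions at $z=\infty$ rather than by comparing Fourier coefficients. The paper's route buys simplicity: no matching of zeros on $\T$ with multiplicity, no regularity or reflection considerations for $r$, everything stays inside elementary algebra of trigonometric polynomials. Your route buys conceptual clarity (the obstruction to uniqueness is the logarithmic derivative $\psi$ of the unimodular factor) and a slick endgame at infinity.

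Three small repairs are needed. First, dispatch the case $p\equiv 0$ at the outset, since otherwise $r$ is undefined (this case is trivial). Second, your assertion that $r$ is a unimodular constant times a finite Blaschke product is not right in general: $r=\tilde p/p$ may have poles inside $\D$, so it is only a quotient of Blaschke products; fortunately you never use this claim, only that $r$ is regular, nonvanishing, and unimodular on $\T$, which does hold. Third, at the very end, $m=0$ does not make $p$ constant (it only says $p(0)\neq 0$); argue instead that $\deg\tilde p=0$ makes $\tilde p$ constant, hence $|p'|=|\tilde p'|\equiv 0$ on $\T$, hence $p'\equiv 0$ and $p$ is constant, after which $r$ is constant and $\psi\equiv 0$, contradicting your standing assumption.
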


Note that point evaluations of $p$ and $p'$ are linear measurements of $p$. Hence, the measurements in \eqref{e:measurements} are indeed intensity measurements of $p$.

Theorem \ref{t:main} bears two advantages over Theorem 2.3 in \cite{bh}. First, its proof (given in Section \ref{s:proof}) is self-contained and simpler than that of \cite[Theorem 2.3]{bh}. Second, the linear measurements in Theorem \ref{t:main} are evaluations of polynomials at points {\em on the unit circle only} and hence correspond to Fourier measurements\footnote{Hereby, we mean the scalar product in $\C^N$ with a vector $\big(1,\omega^j,\omega^{2j},\ldots,\omega^{(N-1)j}\big)^T$, where $|\omega| = 1$.} in $\C^N$. However, we remark that the second set of measurements in Theorem \ref{t:main} consists of Fourier measurements of $p'$ and not of $p$ itself.

The rest of this paper consists of two parts. In Section \ref{s:proof} we prove Theorem \ref{t:main}. In the second part (Section \ref{s:alg}) we present a corresponding algorithm in Section \ref{s:alg} (Algorithm 1) and prove in Theorem \ref{t:alg} that it in fact recovers any polynomial in $\frakP_N$ from $4N-4$ intensity measurements as in Theorem \ref{t:main}, up to global phase.

\vspace{.5cm}
\section{Proof of Theorem \ref{t:main}}\label{s:proof}
Recall that a trigonometric polynomial of degree at most $m$ has the form
\begin{equation}\label{e:tp}
f(t) = \sum_{n=-m}^m\alpha_n e^{int} = \sum_{n=1}^m\alpha_{-n}e^{-int} + \alpha_0 + \sum_{n=1}^{m}\alpha_ne^{int},\quad t\in\R,
\end{equation}
where $\alpha_n\in\C$, $n=-m,\ldots,m$. The trigonometric polynomial $f$ is said to be {\em real} if $\alpha_{-n} = \ol{\alpha_n}$ for $n=0,\ldots,m$. For the proof of Theorem \ref{t:main} we only need the following two simple lemmas. Although Lemma \ref{l:interpolation1} directly follows from \cite[Satz 10.6]{sw}, we prove it here for the convenience of the reader.

\begin{lem}\label{l:interpolation1}
A trigonometric polynomial $f$ of degree at most $m$ is uniquely determined by any $2m+1$ mutually distinct point evaluations of $f$ in $[0,2\pi)$.
\end{lem}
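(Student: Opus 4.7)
The plan is to reduce the claim to the familiar fact that a polynomial of degree at most $2m$ with $2m+1$ distinct roots is identically zero. By linearity of the map from coefficients $(\alpha_{-m},\ldots,\alpha_m)$ to point-value vectors, it suffices to show that a trigonometric polynomial $f$ of degree at most $m$ which vanishes at $2m+1$ mutually distinct points $t_1,\ldots,t_{2m+1}\in[0,2\pi)$ must be identically zero; uniqueness then follows by applying this to the difference of two interpolants.

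The key step is a substitution. Writing $f$ as in \eqref{e:tp} and multiplying through by $e^{imt}$, I set $z = e^{it}$ and consider
$$
g(z) \;=\; e^{imt}f(t) \;=\; \sum_{n=-m}^{m}\alpha_n z^{n+m} \;=\; \sum_{k=0}^{2m}\alpha_{k-m}z^k,
$$
which is an ordinary algebraic polynomial in $z$ of degree at most $2m$. Since $e^{imt}\neq 0$, the zeros of $f$ in $[0,2\pi)$ correspond bijectively (via $t\mapsto e^{it}$, which is injective on $[0,2\pi)$) to zeros of $g$ on $\T$. Thus $g$ has $2m+1$ mutually distinct zeros $z_j = e^{it_j}$, which forces $g\equiv 0$ and hence $f\equiv 0$.

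There is no real obstacle here; the only thing to watch is the injectivity of $t\mapsto e^{it}$ on the half-open interval $[0,2\pi)$, which guarantees that the $z_j$ on the unit circle remain distinct, so the fundamental theorem of algebra can be applied to $g$.
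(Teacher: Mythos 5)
Your proof is correct and takes essentially the same route as the paper: the identical substitution $z=e^{it}$, turning $e^{imt}f(t)$ into an algebraic polynomial of degree at most $2m$, with distinctness of the points $e^{it_j}$ on $\T$ guaranteed by injectivity of $t\mapsto e^{it}$ on $[0,2\pi)$. The only cosmetic difference is that you phrase uniqueness through the kernel (a polynomial of degree at most $2m$ with $2m+1$ distinct roots vanishes identically), while the paper invokes uniqueness of polynomial interpolation directly --- the same underlying fact.
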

\begin{proof}
Let $f$ be as in \eqref{e:tp} and put
$$
p(z) := \sum_{n=-m}^m\alpha_n z^{n+m} = \sum_{k=0}^{2m}\alpha_{k-m}z^k,\quad z\in\C.
$$
Now, let $t_1,\ldots,t_{2m+1}\in [0,2\pi)$ be $2m+1$ distinct real points so that $f(t_k)$ is known for $k=1,\ldots,2m+1$. Then, as $p(e^{it_k}) = e^{imt_k}f(t_k)$, $k=1,\ldots,2m+1$, the values of the polynomial $p$ are known at $2m+1$ distinct points. These determine $p$ uniquely since the degree of $p$ is at most $2m$. Hence, from $f(t) = e^{-imt}p(e^{it})$ we obtain $f$.
\end{proof}

\begin{lem}\label{l:real}
Let $f$ and $g$ be two real trigonometric polynomials such that $|f(t)| = |g(t)|$ for all $t\in\R$. Then $g = -f$ or $g = f$.
\end{lem}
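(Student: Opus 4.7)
The plan is to reduce the factorization identity $f^2=g^2$ to the vanishing of one of the factors $f\pm g$, using the fact that a nonzero trigonometric polynomial has only finitely many zeros in $[0,2\pi)$.

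First I would observe that since $f$ and $g$ are real, the hypothesis $|f(t)|=|g(t)|$ for all $t\in\R$ is equivalent to $f(t)^2=g(t)^2$ for all $t\in\R$, which we rewrite as
\begin{equation*}
(f(t)-g(t))(f(t)+g(t))=0\quad\text{for all }t\in\R.
\end{equation*}
Both $h_-:=f-g$ and $h_+:=f+g$ are again real trigonometric polynomials (of degree at most the maximum of the degrees of $f$ and $g$).

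Next I would argue that at least one of $h_-$, $h_+$ must be identically zero. Indeed, the union of their zero sets in $[0,2\pi)$ is all of $[0,2\pi)$, hence uncountable, so at least one of them has uncountably many (in particular, infinitely many) zeros in $[0,2\pi)$. But a nonzero trigonometric polynomial of degree at most $m$ has at most $2m$ zeros in $[0,2\pi)$; this can be seen via the substitution used in the proof of Lemma~\ref{l:interpolation1}, where such a trigonometric polynomial corresponds to a nonzero algebraic polynomial of degree at most $2m$ evaluated at the $2m+1$ distinct points $e^{it_1},\ldots,e^{it_{2m+1}}$ on the unit circle. Consequently whichever of $h_\pm$ has infinitely many zeros must vanish identically, giving $g=f$ or $g=-f$.

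There is no real obstacle here; the only point requiring a sentence of care is the finiteness of zeros of a nonzero trigonometric polynomial, which is immediate from the reduction to algebraic polynomials in Lemma~\ref{l:interpolation1}. Everything else is the standard "product of continuous functions vanishes identically implies one factor vanishes on a dense set" argument, adapted to the trigonometric setting.
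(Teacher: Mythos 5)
Your proof is correct, but it takes a genuinely different route from the paper's. The paper argues analytically: assuming without loss of generality that $f\neq 0$ and $g\neq 0$, it uses continuity and the realness of the values to find a small interval $\Delta=(0,t_0)$ on which $g|_\Delta=f|_\Delta$ or $g|_\Delta=-f|_\Delta$, and then invokes the fact that $f$ and $g$ are restrictions of entire functions, so the identity theorem upgrades this local equality to a global one. You instead argue algebraically: you factor the identity $f^2=g^2$ as $(f-g)(f+g)=0$, observe that one of the two factors must have infinitely many zeros in $[0,2\pi)$, and then kill that factor by the zero bound for trigonometric polynomials, which you correctly derive from the reduction to algebraic polynomials in Lemma~\ref{l:interpolation1} together with the fundamental theorem of algebra. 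Both proofs are complete. Yours is arguably more self-contained: it needs nothing beyond the fundamental theorem of algebra, and it sidesteps the one slightly delicate point in the paper's argument, namely that the existence of the interval $\Delta$ silently uses that the zeros of $f$ do not accumulate at $0$ (itself a consequence of $f$ being a nonzero trigonometric polynomial, or of analyticity). The paper's version, in turn, is shorter and applies verbatim to arbitrary real-analytic functions, not just trigonometric polynomials. A minor simplification of your pigeonhole step: rather than invoking uncountability of $[0,2\pi)$, you could simply note that if both $f-g$ and $f+g$ were nonzero, each would have only finitely many zeros per period, so their zero sets could not cover $[0,2\pi)$.
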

\begin{proof}
It is no restriction to assume $f\neq 0$ and $g\neq 0$. Since the values $f(t)$ and $g(t)$ are real for each $t\in\R$ and $f$ and $g$ are continuous, it follows that there exists $t_0 > 0$ such that on $\Delta = (0,t_0)$ we have either $g|_\Delta = -f|_\Delta$ or $g|_\Delta = f|_\Delta$. But as $f$ and $g$ are restrictions of entire functions to $\R$ it follows that $g = -f$ or $g = f$.
\end{proof}

The above two simple lemmas now allow us to prove Theorem \ref{t:main}.

\begin{proof}[Proof of Theorem \rmref{t:main}]
First of all, we observe that $t\mapsto |p(e^{it})|^2$ and $t\mapsto |p'(e^{it})|^2$ are trigonometric polynomials of degrees at most $N-1$ and $N-2$, respectively. Thus, by Lemma \ref{l:interpolation1} the measurements in Theorem \ref{t:main} uniquely determine the restrictions of $|p|$ and $|p'|$ to the unit circle. Hence, we have to prove that two polynomials $p\in\frakP_N$ and $q\in\frakP_N$ with
\begin{equation}\label{e:voraussetzung}
|p(e^{it})| = |q(e^{it})|
\quad\text{and}\quad
|p'(e^{it})| = |q'(e^{it})|
\quad\text{for all }t\in\R
\end{equation}
must be linearly dependent. Without loss of generality we assume that both polynomials $p$ and $q$ do not vanish identically. Now, we define functions $g_p,g_q : \C\to\C$ by
$$
g_p(z) := zp'(z)\ol{p(z)}
\quad\text{and}\quad
g_q(z) := zq'(z)\ol{q(z)}, \quad z\in\C.
$$
Then \eqref{e:voraussetzung} implies
\begin{equation}\label{e:simplest_ever}
|g_p(e^{it})| = |g_q(e^{it})|
\end{equation}
for all $t\in\R$. But also
\begin{equation}\label{e:almost_simple}
\Im g_p(e^{it}) = \Im g_q(e^{it})
\end{equation}
for all $t\in\R$ since we have
$$
\frac{d}{dt}|p(e^{it})|^2
= 2\Re\left(ie^{it}p'(e^{it})\ol{p(e^{it})}\right) = -2\Im g_p(e^{it}),
$$
and, similarly, $\frac d{dt}|q(e^{it})|^2 = -2\Im g_q(e^{it})$. As a consequence of \eqref{e:simplest_ever} and \eqref{e:almost_simple} we obtain
\begin{equation}\label{e:close_to_simple}
|\Re g_p(e^{it})| = |\Re g_q(e^{it})|
\end{equation}
for all $t\in\R$. But $\Re g_p(e^{it})$ and $\Re g_q(e^{it})$ are {\em real} trigonometric polynomials in $t$, so that Lemma \ref{l:real} implies that $\Re g_q(e^{it}) = \Re g_p(e^{it})$ for all $t\in\R$ or that $\Re g_q(e^{it}) = -\Re g_p(e^{it})$ for all $t\in\R$. Combining this and \eqref{e:almost_simple}, it follows that either 
\begin{equation}\label{e:alt}
g_q(e^{it}) = g_p(e^{it})\quad\text{for $t\in\R$}\qquad\text{or}\qquad g_q(e^{it}) = -\ol{g_p(e^{it})}\quad\text{for $t\in\R$}.
\end{equation}
In the first case, we have
\begin{equation}\label{e:toshow}
p'(z)\ol{p(z)} = q'(z)\ol{q(z)}\quad\text{for all }z\in\T.
\end{equation}
Taking \eqref{e:voraussetzung} into account, we see that multiplication of \eqref{e:toshow} with $p(z)q(z)$, $z\in\T$, leads to
$$
p'(z)q(z) = q'(z)p(z)\quad\text{for all }z\in\T.
$$
This implies $p'(z)q(z) = q'(z)p(z)$ for all $z\in\C$ and thus $\frac{d}{dz}\frac{p(z)}{q(z)}\equiv 0$. Consequently, $p$ and $q$ are linearly dependent.

Let us assume that the second case in \eqref{e:alt} applies, i.e.
\begin{equation}\label{e:const}
e^{it}q'(e^{it})\ol{q(e^{it})} = -e^{-it}\ol{p'(e^{it})}p(e^{it})\quad\text{for all }t\in\R.
\end{equation}
We will show that both $p$ and $q$ must be constant and therefore linearly dependent. For this, let
$$
p(z) = \sum_{k=0}^{N-1}\alpha_kz^k
\quad\text{and}\quad
q(z) = \sum_{k=0}^{N-1}\beta_kz^k.
$$
Then the relation \eqref{e:const} reads
$$
\sum_{k=0}^{N-1}\sum_{j=1}^{N-1} j\beta_j\ol{\beta_k}e^{i(j-k)t} = -\sum_{k=0}^{N-1}\sum_{j=1}^{N-1} j\ol{\alpha_j}\alpha_ke^{i(k-j)t}\quad\text{for all }t\in\R.
$$
If we now compare the zero-th coefficients (i.e.\ those for $j=k$) on right and left hand side of the previous equation, we obtain
$$
\sum_{j=1}^{N-1}j|\beta_j|^2 = -\sum_{j=1}^{N-1}j|\alpha_j|^2.
$$
Thus, $\beta_1 = \ldots = \beta_{N-1} = \alpha_1 = \ldots = \alpha_{N-1} = 0$, which implies $p(z) = \alpha_0$ and $q(z) = \beta_0$.
\end{proof}

\vspace{.5cm}
\section{A Reconstruction Algorithm}\label{s:alg}
We say that two polynomials $p$ and $q$ are equivalent (in terms of global phase), and write $p\sim q$, if $q = e^{i\vphi}p$ with some $\vphi\in [0,2\pi)$. Theorem \ref{t:main} shows that for any two ensembles
$$
w_1,\ldots,w_{2N-1}\in\T
\qquad\text{and}\qquad
z_1,\ldots,z_{2N-3}\in\T
$$
of mutually distinct points on the unit circle, respectively, the (well-defined) {\em magnitude map}
$$
\frakP_N/\sim\,\to\R^{2N-1}\times\R^{2N-3},\quad [p]\mapsto \left(\left(\left|p(w_j)\right|\right)_{j=1}^{2N-1},\left(\left|p'(z_j)\right|\right)_{j=1}^{2N-3}\right),
$$
is injective. However, Theorem \ref{t:main} does not provide an inverse map, nor is its proof constructive.

Algorithm 1 below reconstructs a signal $p\in\frakP_N$ (up to a global phase factor) from the following $4N-4$ phaseless measurements (where $\omega_m := e^{\frac{2\pi i}{m}}$ for $m\in\N$):
\begin{equation}\label{e:roots}
\left|p(\omega_{2N-1}^j)\right|,\; j=0,\ldots,2N-2
\qquad\text{and}\qquad
\left|p(\omega_{2N-3}^j)\right|,\; j=0,\ldots,2N-4.
\end{equation}
Before we present the algorithm, let us prove a couple of preparatory lemmas. The following one is a well known interpolation result (see also the proof of Theorem 2.3 in \cite{bh}). For the convenience of the reader we provide its short proof below.

\begin{lem}\label{l:interpolation2}
Let $f$ be a trigonometric polynomial of degree at most $N-1$, i.e.
$$
f(t) = \sum_{n=-N+1}^{N-1}a_n e^{int},\quad t\in\R,
$$
where $a_n\in\C$, $n=-N+1,\ldots,N-1$. Moreover, put $t_j = \frac{2\pi j}{2N-1}$, $j=0,\ldots,2N-2$. Then we have
$$
a_n = \frac{1}{2N-1}\sum_{j=0}^{2N-2}f(t_j)e^{-int_j},\quad n=-N+1,\ldots,N-1.
$$
In particular, the values $f(t_j)$, $j=0,\ldots,2N-2$, uniquely determine $f$.
\end{lem}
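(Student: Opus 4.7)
The plan is to perform the standard discrete Fourier inversion: substitute the expansion of $f$ into the proposed formula for $a_n$, swap the order of summation, and then evaluate the resulting inner sum using the orthogonality of the $(2N-1)$-th roots of unity. The crucial observation that makes the argument work (rather than suffering from aliasing) is that the spectrum of $f$ lies in $\{-(N-1),\ldots,N-1\}$, which has width $2N-1$, exactly matching the number of sample points.

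Concretely, setting $M:=2N-1$ and plugging in $f(t_j)=\sum_{k=-(N-1)}^{N-1}a_k e^{ikt_j}$, I would compute
$$
\frac{1}{M}\sum_{j=0}^{M-1}f(t_j)e^{-int_j}
=\sum_{k=-(N-1)}^{N-1}a_k\cdot\frac{1}{M}\sum_{j=0}^{M-1}e^{i(k-n)t_j}.
$$
Since $t_j=\tfrac{2\pi j}{M}$, the inner sum is a geometric sum in the $M$-th root of unity $\omega_M^{k-n}$; it equals $1$ when $M\mid k-n$ and $0$ otherwise. Because $k,n\in\{-(N-1),\ldots,N-1\}$, we have $|k-n|\le 2N-2<M$, so the only integer multiple of $M$ attained by $k-n$ is $0$, i.e.\ $k=n$. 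Hence the double sum collapses to $a_n$, giving the stated formula.

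For the "in particular" part, the above formula expresses every coefficient $a_n$ of $f$ as an explicit linear combination of the sample values $f(t_0),\ldots,f(t_{2N-2})$; thus these samples determine $f$ uniquely.

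There is essentially no obstacle here beyond bookkeeping: the only delicate point is to verify that the index range $\{-(N-1),\ldots,N-1\}$ has width strictly less than $M=2N-1$ as a set of differences, so that the Kronecker-delta from the root-of-unity sum picks out precisely $k=n$ and no spurious congruent index. Everything else is just rearranging finite sums.
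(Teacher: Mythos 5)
Your proof is correct and follows essentially the same route as the paper: the paper's (terser) proof also reduces the formula to the orthogonality identity $\sum_{j=0}^{m-1}e^{2\pi i l j/m}=0$ for $m\nmid l$, obtained from the geometric sum. You simply write out in full the substitution, the interchange of sums, and the key no-aliasing observation that $|k-n|\le 2N-2 < 2N-1$, which the paper leaves implicit.
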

\begin{proof}
This fact is easily checked by using the identity $\sum_{j=0}^{m-1}e^{\frac{2\pi ilj}{m}} = 0$, which holds for integers $l,m\in\Z$, $l/m\notin\Z$, and follows directly from the well known formula
$$
\sum_{j=0}^{m-1}z^j = \frac{1 - z^m}{1-z},\quad z\neq 1,
$$
with $z = e^{\frac{2\pi il}{m}}$.
\end{proof}

\begin{lem}
Let $p\in\frakP_N$, $p(z) = \sum_{k=0}^{N-1}\alpha_kz^k$, and put
\begin{equation}\label{e:fns1}
f_n := \frac{1}{2N-1}\sum_{j=0}^{2N-2}|p(\omega_{2N-1}^j)|^2\exp\left(-\frac{2\pi ijn}{2N-1}\right),\quad n=0,\ldots,N-1,
\end{equation}
as well as
\begin{equation}\label{e:fn's1}
f_n' := \frac{1}{2N-3}\sum_{j=0}^{2N-4}|p'(\omega_{2N-3}^j)|^2\exp\left(-\frac{2\pi ijn}{2N-3}\right),\quad n=0,\ldots,N-2.
\end{equation}
Moreover, set $d := \deg(p)$. Then we have\footnote{Here and in the following, sums $\sum_{j=m}^n$ with $m > n$ should be read as zero.}
\begin{align}
\label{e:p3}|p(e^{it})|^2 &= \sum_{n=1}^{d}f_n e^{int} + f_0 + \sum_{n=1}^{d}\ol{f_n}e^{-int},\quad t\in\R,\\
\label{e:p'3}|p'(e^{it})|^2 &= \sum_{n=1}^{d-1}f_n' e^{int} + f_0' + \sum_{n=1}^{d-1}\ol{f_n'}e^{-int},\quad t\in\R.
\end{align}
as well as
\begin{align}
\label{e:fnd}f_n = \sum_{\ell = 0}^{d-n}\ol{\alpha_\ell}\alpha_{\ell+n},\quad n=0,\ldots,d,\qquad &\text{and}\quad f_n = 0\text{ if }n > d\\
\label{e:fn'd}f_n' = \sum_{\ell=1}^{d-n}\ell(\ell+n)\ol{\alpha_\ell}\alpha_{\ell+n},\quad n=0,\ldots,d-1,\qquad &\text{and}\quad f_n' = 0\text{ if }n > d-1.
\end{align}
\end{lem}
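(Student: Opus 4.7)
The plan is to obtain everything from two independent ingredients: a direct product expansion of $|p(e^{it})|^2$ and $|p'(e^{it})|^2$, which identifies their Fourier coefficients and also establishes that they are trigonometric polynomials of sufficiently small degree; and Lemma \ref{l:interpolation2} (or its obvious analog with $N$ replaced by $N-1$), which expresses those same Fourier coefficients as discrete Fourier transforms of the sampled values. These two viewpoints on the same coefficient then force the identities \eqref{e:fns1}--\eqref{e:fn'd}.

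First I would treat $|p(e^{it})|^2$. Writing it as $p(e^{it})\,\ol{p(e^{it})}$ and multiplying out gives
$$
|p(e^{it})|^2 \;=\; \sum_{k=0}^{d}\sum_{\ell=0}^{d}\alpha_k\ol{\alpha_\ell}\,e^{i(k-\ell)t},
$$
which, after grouping the terms by the exponent $n = k-\ell$, is visibly a real trigonometric polynomial of degree exactly $d$ whose coefficient of $e^{int}$ for $n\ge 0$ equals $\sum_{\ell=0}^{d-n}\ol{\alpha_\ell}\alpha_{\ell+n}$. Since the negative-index coefficients are the complex conjugates of the positive-index ones, this immediately yields both \eqref{e:p3} and \eqref{e:fnd}. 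Because $d\le N-1$, the function $|p(e^{it})|^2$ is in particular a trigonometric polynomial of degree at most $N-1$, so Lemma \ref{l:interpolation2} applied at the nodes $t_j=2\pi j/(2N-1)$ (at which $|p(e^{it_j})|^2 = |p(\omega_{2N-1}^j)|^2$) reproduces \eqref{e:fns1}.

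The derivative case is verbatim the same. From $p'(z)=\sum_{k=1}^{d}k\alpha_k z^{k-1}$ I get
$$
|p'(e^{it})|^2 \;=\; \sum_{k=1}^{d}\sum_{\ell=1}^{d}k\ell\,\alpha_k\ol{\alpha_\ell}\,e^{i(k-\ell)t},
$$
a real trigonometric polynomial of degree at most $d-1\le N-2$ whose coefficient of $e^{int}$ for $n\ge 0$ is $\sum_{\ell=1}^{d-n}\ell(\ell+n)\ol{\alpha_\ell}\alpha_{\ell+n}$. This gives \eqref{e:p'3} and \eqref{e:fn'd}, and the analog of Lemma \ref{l:interpolation2} with $N$ replaced by $N-1$, applied at the $2N-3$ equispaced nodes $2\pi j/(2N-3)$, then yields \eqref{e:fn's1}.

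There is essentially no genuine obstacle here: the interpolation lemma does the nontrivial work, and everything else is reading off a product expansion. The only bit of care needed is to respect the summation bounds when one passes from the nominal degree bound $N-1$ for $p\in\frakP_N$ to the actual degree $d = \deg p$, since this is precisely what produces the vanishing $f_n=0$ for $n>d$ and $f_n'=0$ for $n>d-1$.
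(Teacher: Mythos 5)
Your proof is correct and takes essentially the same route as the paper: expand $|p(e^{it})|^2$ and $|p'(e^{it})|^2$ as products to read off their Fourier coefficients in terms of the $\alpha_k$, then identify these coefficients with the DFT expressions \eqref{e:fns1}--\eqref{e:fn's1} via Lemma \ref{l:interpolation2} (and its analog with $N$ replaced by $N-1$). The only cosmetic differences are that the paper writes the coefficient sums with upper limit $N-1$ and then truncates using $\alpha_j=0$ for $j>d$ whereas you index directly by $d$, and that your phrase ``degree exactly $d$'' should be ``degree at most $d$'' (the top coefficient $\ol{\alpha_0}\alpha_d$ may vanish), which is all the argument needs anyway.
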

\begin{proof}
The formulas \eqref{e:p3} and \eqref{e:p'3} (with $d$ replaced by $N-1$) follow directly from Lemma \ref{l:interpolation2}. On the other hand, we also have
$$
|p(e^{it})|^2 = \sum_{k,n=0}^{N-1}\ol{\alpha_k}\alpha_ne^{i(n-k)t}
\qquad\text{and}\qquad
|p'(e^{it})|^2 = \sum_{k,n=1}^{N-1}kn\ol{\alpha_k}\alpha_ne^{i(n-k)t}.
$$
This implies that
\begin{align}
\label{e:fn1}f_n &= \sum_{\ell=0}^{N-1-n}\ol{\alpha_\ell}\alpha_{\ell+n},\quad n=0,\ldots,N-1,\\
\label{e:fn'1}f_n' &= \sum_{\ell=1}^{N-1-n}\ell(\ell+n)\ol{\alpha_\ell}\alpha_{\ell+n},\quad n=0,\ldots,N-2.
\end{align}
Now, if $\alpha_j = 0$ if $j > d$, we obtain \eqref{e:fnd}--\eqref{e:fn'd}, and thus also \eqref{e:p3}--\eqref{e:p'3}.
\end{proof}

In the following, it is convenient to define $f_{N-1}' := 0$.

\begin{lem}\label{l:breakthrough}
Let $p\in\frakP_N$, $p(z) = \sum_{k=0}^{N-1}\alpha_kz^k$, assume that $p$ is not a constant, and let $f_n$ and $f_n'$ be defined as in \eqref{e:fns1}--\eqref{e:fn's1}. Let
$$
k := \max\{n : f_n\neq 0\}\quad\text{and}\quad k' := \max\{n : f_n'\neq 0\},
$$
and set
\begin{equation}\label{e:m}
m := -\frac{k}{2} + \sqrt{\frac{f_k'}{f_k} + \frac{k^2}{4}}.
\end{equation}
Then $k\,\ge\,k'$, $m\in\N_0$, and we have
\begin{align}
\begin{split}\label{e:alphas}
\alpha_j &= 0\quad\text{for }j < m\\
\alpha_m&\neq 0,\quad \alpha_{m+k}\neq 0\\
\alpha_j &= 0\quad\text{for }j > m+k.
\end{split}
\end{align}
In particular, $\deg(p) = m+k$.
\end{lem}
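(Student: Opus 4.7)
The plan is to read off $k$, $k'$, and $m$ directly from the definitions \eqref{e:fnd}--\eqref{e:fn'd}, after naming the smallest and largest indices on which $p$ is supported. Write $d = \deg(p) \geq 1$ (since $p$ is not constant) and let $m_0 := \min\{j : \alpha_j \neq 0\}$, so that $p(z) = \sum_{j=m_0}^{d}\alpha_jz^j$ with $\alpha_{m_0}\alpha_d \neq 0$. What I want to show is $m = m_0$, $k = d-m_0$, and $k \geq k'$; the displayed claim \eqref{e:alphas} is then just a restatement of the definitions of $m_0$ and $d$, and $\deg(p) = d = m_0 + (d-m_0) = m + k$.

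First I would analyze $f_n$. Since every term in \eqref{e:fnd} requires $\alpha_\ell \alpha_{\ell+n} \neq 0$, which forces $m_0 \leq \ell$ and $\ell + n \leq d$, no term survives once $n > d - m_0$, and at $n = d-m_0$ the sum reduces to the single term $\ell = m_0$, yielding $f_{d-m_0} = \overline{\alpha_{m_0}}\alpha_d \neq 0$. Hence $k = d-m_0$ and $f_k = \overline{\alpha_{m_0}}\alpha_d$.

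Next I would analyze $f_n'$, which is the same sum weighted by $\ell(\ell+n)$ but restricted to $\ell \geq 1$. I split into two cases. If $m_0 \geq 1$, the same reasoning gives $k' = d - m_0 = k$ and $f_k' = m_0 \cdot d \cdot \overline{\alpha_{m_0}}\alpha_d = m_0 d \, f_k$, so $f_k'/f_k = m_0 d = m_0(m_0 + k)$; plugging into \eqref{e:m} yields
\[
m = -\tfrac{k}{2} + \sqrt{m_0^2 + m_0 k + \tfrac{k^2}{4}} = -\tfrac{k}{2} + \bigl(m_0 + \tfrac{k}{2}\bigr) = m_0,
\]
where the positive square root is the correct branch because $m_0 + k/2 > 0$. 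If instead $m_0 = 0$, then in $f_k' = f_d'$ the constraint $\ell \geq 1$ forces $\ell \leq d - k = 0$, so the sum is empty and $f_k' = 0$; hence any nonzero $f_n'$ requires $n < k$, giving $k' < k$, and \eqref{e:m} collapses to $m = -k/2 + \sqrt{k^2/4} = 0 = m_0$.

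In both cases $m = m_0 \in \N_0$ and $k \geq k'$, completing the proof. The only subtlety (and the place where a careless argument could fail) is the case $m_0 = 0$: one must notice that the $\ell = 0$ term is absent from $f_n'$, which is precisely what makes $f_k'$ vanish and forces the formula \eqref{e:m} to return $0$ rather than some spurious value. Everything else is a direct reading of the coefficient formulas.
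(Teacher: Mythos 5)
Your proof is correct, and it establishes the lemma by running the paper's computation in the opposite direction. The paper works from the data side: taking $k$ as given, it proves by induction (its implication \eqref{e:implication}) that $\alpha_0=\ldots=\alpha_{d-1-k}=0$ and $\alpha_{d-k}\neq0$ whenever $k\le d-1$, then splits into the cases $k>k'$ and $k=k'$, handling the first by contradiction (supposing $d\ge k+1$ and deriving $f_k=0$). You instead work from the signal side: naming $m_0=\min\{j:\alpha_j\neq0\}$ and $d$ up front, you read off directly from \eqref{e:fnd}--\eqref{e:fn'd} that $k=d-m_0$ and $f_k=\ol{\alpha_{m_0}}\alpha_d$, and then, splitting on $m_0\ge1$ versus $m_0=0$, that $f_k'=m_0(m_0+k)f_k$ or $f_k'=0$ respectively, so that the quadratic in \eqref{e:m} returns $m=m_0$ in both cases. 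The two arguments hinge on exactly the same identities --- the extreme autocorrelation coefficient is $\ol{\alpha_{m_0}}\alpha_d$, its weighted counterpart carries the factor $m_0d=m_0(m_0+k)$, and solving $(m_0+k/2)^2=f_k'/f_k+k^2/4$ recovers $m_0$ --- and your case split on $m_0$ mirrors the paper's split on $k$ versus $k'$ (indeed $m_0=0\Leftrightarrow k>k'$). What your orientation buys is economy: the paper's induction collapses to a one-line support argument (a term $\ol{\alpha_\ell}\alpha_{\ell+n}$ survives only if $m_0\le\ell\le d-n$), and the proof by contradiction disappears. The only point you should state explicitly is that $k'$ is well defined, i.e.\ that some $f_n'$ is nonzero; this follows from $f_0'=\sum_{\ell\ge1}\ell^2|\alpha_\ell|^2>0$ for non-constant $p$ (a point on which the paper is equally silent).
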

\begin{proof}
Let $d := \deg(p)\le N-1$. Then formulas \eqref{e:fnd}--\eqref{e:fn'd} imply that $k,k'\le d$. First of all, we prove that
\begin{equation}\label{e:implication}
k\,\le\,d-1\quad\Lra\quad\alpha_0 = \ldots = \alpha_{d-1-k} = 0,\;\alpha_{d-k}\neq 0.
\end{equation}
Towards an induction argument, assume that $\alpha_j = 0$ if $j < n$, where $0\le n\le d-k-1$ (which is true for $n=0$). Then $d - n > k$, and thus \eqref{e:fnd} yields $0 = f_{d-n} = \ol{\alpha_n}\alpha_d$, which implies $\alpha_n = 0$ as $\alpha_d\neq 0$. Hence, $\alpha_0 = \ldots = \alpha_{d-1-k} = 0$ is proved. $\alpha_{d-k}\neq 0$ now follows from \eqref{e:fnd}: $0\neq f_k = \ol{\alpha_{d-k}}\alpha_d$.

To prove $k\ge k'$, we observe that if $k+1 > d-1$, then $f_n' = 0$ for $n\ge k+1$ follows from \eqref{e:fn'd}. Let $k+1\le d-1$. Then \eqref{e:implication} implies $\alpha_j = 0$ for $j < d-k$. Hence, for $n\ge k+1$, we have $d-k > d-n$ and thus $f_n' = 0$ (see \eqref{e:fn'd}). Therefore, in both cases, $f_n' = 0$ for $n\ge k+1$, which implies $k\ge k'$.

Let $k > k'$. Then $f_k' = 0$ and thus also $m = 0$. Suppose that $d\ge k+1$. Then \eqref{e:implication} implies that $\alpha_0 = \ldots = \alpha_{d-1-k} = 0$. In addition, from $f_k' = 0$ we further obtain
$$
0 = \sum_{\ell=1}^{d-k}\ell(\ell+k)\ol{\alpha_\ell}\alpha_{\ell+k} = (d-k)d\ol{\alpha_{d-k}}\alpha_d,
$$
and thus $\alpha_{d-k} = 0$. We conclude
$$
f_k = \sum_{\ell = 0}^{d-k}\ol{\alpha_\ell}\alpha_{\ell+k} = 0,
$$
contrary to the definition of $k$. Therefore, $k = d$ follows. This also yields $f_k = \ol{\alpha_0}\alpha_k$, which implies $\alpha_0\neq 0$ and $\alpha_k\neq 0$. This proves \eqref{e:alphas} in the case $k > k'$.

Let $k = k'$. We define $r := d - k$. As $f_n' = 0$ if $n\ge d$, it follows that $k = k'\le d-1$. Thus, $r\ge 1$, and by \eqref{e:implication} we have
$$
\alpha_0 = \ldots = \alpha_{r-1} = 0\quad\text{and}\quad\alpha_r\neq 0.
$$
Thus, if we can show that $r = m$ (where $m$ is as defined in \eqref{e:m}), then also \eqref{e:alphas} follows, and the lemma is proved. To prove $r = m$, we obtain from \eqref{e:fnd} and \eqref{e:fn'd} that
$$
f_k = \ol{\alpha_{d-k}}\alpha_d\quad\text{and}\quad f_k' = (d-k)d\ol{\alpha_{d-k}}\alpha_d.
$$
Therefore,
$$
\frac{f_k'}{f_k} = (d-k)d = r(r+k) = r^2 + kr = \left(r + \frac k 2\right)^2 - \frac{k^2}{4},
$$
which implies $r = m$.
\end{proof}

\newpage
\makeatletter
  \fst@algorithm\@fs@pre
\makeatother
\begingroup
\captionsetup{style=ruled,type=algorithm,skip=0pt}
\caption{Phase Retrieval from the Measurements in \eqref{e:roots}}
\endgroup
\makeatletter
  \@fs@mid\vspace{2pt}
\makeatother
\begin{algorithmic}
\STATE {\bf Input:} The data \eqref{e:roots} obtained from $p\in\frakP_N$, $p(z) = \sum_{k=0}^{N-1}\alpha_kz^k$.
\STATE {\bf Output:} A polynomial $q$ such that $q = e^{it}p$ for some $t\in\R$.\bigskip
\STATE With the given data \eqref{e:roots} define $f_n$ for $n=0,\ldots,N-1$ and $f_n'$ for $n=0,\ldots,N-2$ as in \eqref{e:fns1}--\eqref{e:fn's1}.
\IF{$f_n' = 0$ for all $n = 0,\ldots,N-2$}
\RETURN $q = \sqrt{f_0}$
\ENDIF
\begin{equation}\label{e:km_alg}
k := \max\{n : f_n\neq 0\}
,\quad m := -\frac{k}{2} + \sqrt{\frac{f_k'}{f_k} + \frac{k^2}{4}},
\quad\text{and}\quad \delta_{m,m+k} := f_{k}.
\end{equation}
\STATE {\bf if} $k\ge 1$ {\bf then}
\begin{equation}\label{e:erstes_alg}
\vek{\delta_{m,m+k-1}}{\delta_{m+1,m+k}} :=
\frac{1}{2m+k}
\begin{pmatrix}
(m+1)(m+k) & -1\\
-m(m+k-1) & 1
\end{pmatrix}
\vek{f_{k-1}}{f_{k-1}'}.
\end{equation}
\STATE {\bf end if}
\IF{$k\ge 2$}{
\FOR{$n=2$ \TO $k$}
\FOR{$\ell=m+1$ \TO $m+n-1$}
\STATE Set
$$
\delta_{\ell,\ell+k-n} := \frac{\delta_{\ell,m+k}}{\delta_{m,m+k}}\,\delta_{m,\ell+k-n}.
$$
\ENDFOR
\STATE Put
\begin{align}
\alpha &:= f_{k-n} - \sum_{\ell=m+1}^{m+n-1}\delta_{\ell,\ell+k-n}\label{e:Alpha}\\
\beta &:= f_{k-n}' - \sum_{\ell=m+1}^{m+n-1}\ell(\ell+k-n)\delta_{\ell,\ell+k-n}\label{e:Beta}
\end{align}
and
\begin{equation}\label{e:last}
\vek{\delta_{m,m+k-n}}{\delta_{m+n,m+k}} :=
\frac{1}{n(2m+k)}
\begin{pmatrix}
(m+n)(m+k) & -1\\
-m(m+k-n) & 1
\end{pmatrix}
\vek\alpha\beta.
\end{equation}
\ENDFOR}
\ENDIF
\STATE With $r := \sqrt{\delta_{m,m}}$ ($\delta_{m,m}$ will be a positive nunber) define the polynonial
\begin{equation}\label{e:ende}
q(z) := \sum_{j=m}^{m+k}\frac{\delta_{m,j}}{r}z^j.
\end{equation}
\RETURN $q$
\end{algorithmic}
\makeatletter
  \vspace{2pt}\@fs@post
\makeatother

\newpage
Let us now prove that Algorithm 1 indeed recovers $p$ up to global phase.

\begin{thm}\label{t:alg}
Let $p\in\frakP_N$, $p(z) = \sum_{n=0}^{N-1}\alpha_nz^n$. Then Algorithm {\rm 1} recovers $p$, up to a global phase factor.
\end{thm}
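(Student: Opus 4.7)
The plan is to prove by induction that every quantity $\delta_{i,j}$ produced by the algorithm equals $\overline{\alpha_i}\alpha_j$. Granted this, the output
\[
q(z)=\sum_{j=m}^{m+k}\frac{\delta_{m,j}}{\sqrt{\delta_{m,m}}}\,z^j = \frac{\overline{\alpha_m}}{|\alpha_m|}\,p(z)
\]
is a unimodular multiple of $p$. First I dispose of the trivial branch: if $f_n'=0$ for all $n$, then \eqref{e:p'3} gives $|p'(e^{it})|\equiv 0$, hence $p$ is a constant $\alpha_0$ with $f_0=|\alpha_0|^2$, and the returned value $q=\sqrt{f_0}=|\alpha_0|$ is a unimodular multiple of $\alpha_0$ (the case $p\equiv 0$ being trivial).

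In the non-constant case, Lemma \ref{l:breakthrough} guarantees that $k$ and $m$ as defined in \eqref{e:km_alg} agree with the quantities from the lemma; in particular $m\in\N_0$, $\alpha_j=0$ for $j<m$ and $j>m+k$, and both $\alpha_m$ and $\alpha_{m+k}$ are nonzero. By \eqref{e:fnd}, the initialisation $\delta_{m,m+k}:=f_k=\overline{\alpha_m}\alpha_{m+k}$ is correct. I would then establish by induction on $n\in\{1,\ldots,k\}$ the invariant that, at the end of iteration $n$, the values $\delta_{\ell,\ell+k-n}$ are correct for every $\ell\in\{m,m+1,\ldots,m+n\}$ and the value $\delta_{m+n,m+k}$ is correct as well.

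The inductive step splits into two parts. In the inner loop, the assignment $\delta_{\ell,\ell+k-n}=(\delta_{\ell,m+k}/\delta_{m,m+k})\,\delta_{m,\ell+k-n}$ collapses, by the inductive hypothesis applied to earlier iterations, to $(\overline{\alpha_\ell}/\overline{\alpha_m})\cdot\overline{\alpha_m}\alpha_{\ell+k-n}=\overline{\alpha_\ell}\alpha_{\ell+k-n}$, which is well-defined since $\alpha_m\neq 0$. For the two remaining unknowns, combining the support of $p$ from Lemma \ref{l:breakthrough} with \eqref{e:fnd}--\eqref{e:fn'd} yields
\[
f_{k-n}=\sum_{\ell=m}^{m+n}\overline{\alpha_\ell}\alpha_{\ell+k-n},\qquad f_{k-n}'=\sum_{\ell=m}^{m+n}\ell(\ell+k-n)\overline{\alpha_\ell}\alpha_{\ell+k-n}
\]
(for $m=0$ the $\ell=0$ term in the second sum vanishes automatically and restores consistency with \eqref{e:fn'd}). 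Substituting the already-recovered middle entries into \eqref{e:Alpha}--\eqref{e:Beta} produces $\alpha=\delta_{m,m+k-n}+\delta_{m+n,m+k}$ and $\beta=m(m+k-n)\delta_{m,m+k-n}+(m+n)(m+k)\delta_{m+n,m+k}$; the coefficient matrix of this linear system has determinant $n(2m+k)>0$, and its inverse is precisely the matrix appearing in \eqref{e:last}, so the two unknowns are recovered exactly. At $n=k$ this procedure produces $\delta_{m,m}=|\alpha_m|^2>0$, so $r=|\alpha_m|$ is well-defined and \eqref{e:ende} gives the desired unimodular multiple of $p$.

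The main obstacle I anticipate is purely bookkeeping: one must verify that every $\delta$-entry invoked on the right-hand side of the inner-loop assignment, of \eqref{e:Alpha}--\eqref{e:Beta}, and of \eqref{e:last} has been computed in a strictly earlier step of the two nested loops, so that the induction closes cleanly. The edge cases $m=0$ (where $m(m+k-n)$ simply drops out of the second row of the $2\times2$ system) and $n=1$ (where the inner loop is vacuous and \eqref{e:erstes_alg} is the same $2\times2$ inversion with the convention that empty sums are zero) both fit into the very same argument.
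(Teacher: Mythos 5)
Your proposal is correct and follows essentially the same route as the paper's own proof: the diagonal-by-diagonal induction establishing $\delta_{i,j}=\ol{\alpha_i}\alpha_j$ via Lemma \ref{l:breakthrough}, the inner-loop quotient identity, and the inversion of the $2\times 2$ system with determinant $n(2m+k)$ are exactly the paper's argument. The only (immaterial) differences are cosmetic: you conclude with $q=\tfrac{\ol{\alpha_m}}{|\alpha_m|}p$ directly instead of normalizing $\alpha_m>0$ without loss of generality, and you spell out the constant branch slightly more explicitly.
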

\begin{proof}
The initial {\bf if} query checks on whether $p$ is constant and returns this constant if this is the case. In the following we assume that $p$ is not a constant. It then follows from Lemma \ref{l:breakthrough} that with $k$ and $m$ in \eqref{e:km_alg} we have $d := \deg(p) = m+k$. Next, we shall show that Algorithm 1 defines $\delta_{j,k}$ such that
\begin{equation}\label{e:deltaalpha}
\delta_{i,j} = \ol{\alpha_i}\alpha_j, \quad i,j=m,\ldots,m+k,\;j\ge i.
\end{equation}
First of all, we have (cf.\ \eqref{e:alphas} and \eqref{e:fnd})
$$
\delta_{m,m+k} = f_k = \sum_{\ell=m}^{d-k}\ol{\alpha_\ell}\alpha_{\ell+k} = \ol{\alpha_m}\alpha_{m+k}\neq 0.
$$
If $k=0$, then this proves \eqref{e:deltaalpha}. Let $k\ge 1$. Then
$$
\begin{pmatrix}
1 & 1\\
m(m+k-1) & (m+1)(m+k)
\end{pmatrix}
\vek{\ol{\alpha_m}\alpha_{m+k-1}}{\ol{\alpha_{m+1}}\alpha_{m+k}}
= \vek{f_{k-1}}{f_{k-1}'}.
$$
And as the $2\times 2$-matrix in \eqref{e:erstes_alg} is the inverse of the one above, we have proved $\delta_{i,j} = \ol{\alpha_i}\alpha_j$ for $j-i\ge k-1$. If $k=1$, this implies \eqref{e:deltaalpha}. Assume that $k\ge 2$. In the following, we shall proceed by induction. We assume that $\delta_{i,j} = \ol{\alpha_i}\alpha_j$ holds for $j-i\ge k-s$, where $s\ge 1$. Let us prove that $\delta_{i,j} = \ol{\alpha_i}\alpha_j$ is then also true for $j-i = k-s-1$. Define $n := s+1\ge 2$. Then, due to the algorithm, for $\ell=m+1,\ldots,m+n-1$ we have
$$
\delta_{\ell,\ell+k-n} = \frac{\delta_{\ell,m+k}}{\delta_{m,m+k}}\,\delta_{m,\ell+k-n} = \frac{\ol{\alpha_\ell}\alpha_{m+k}}{\ol{\alpha_m}\alpha_{m+k}}\cdot\ol{\alpha_m}\alpha_{\ell+k-n} = \ol{\alpha_\ell}\alpha_{\ell+k-n}.
$$
For $\alpha$ and $\beta$, defined in \eqref{e:Alpha}--\eqref{e:Beta}, this implies
$$
\alpha = f_{k-n} - \sum_{\ell=m+1}^{m+n-1}\ol{\alpha_\ell}\alpha_{\ell+k-n} = \ol{\alpha_m}\alpha_{m+k-n} + \ol{\alpha_{m+n}}\alpha_{m+k}.
$$
Similarly, one gets
$$
\beta = m(m+k-n)\ol{\alpha_m}\alpha_{m+k-n} + (m+n)(m+k)\ol{\alpha_{m+n}}\alpha_{m+k}.
$$
Therefore, it follows from \eqref{e:last} that $\ol{\alpha_m}\alpha_{m+k-n} = \delta_{m,m+k-n}$ and $\ol{\alpha_{m+n}}\alpha_{m+k} = \delta_{m+n,m+k}$, and \eqref{e:deltaalpha} is proved.

Since $p$ is only unique up to global phase, we may assume without loss of generality that $\alpha_m\in (0,\infty)$. Hence, if $r$ is defined as in the algorithm ($r = \sqrt{\delta_{m,m}}$) we see from \eqref{e:deltaalpha} that $r = \alpha_m$ and thus $\alpha_\ell = \delta_{m,\ell}/r$ for $\ell=m+1,\ldots,m+k$. Hence, the polynomial $q$ defined in  \eqref{e:ende} is a unimodular multiple of $p$.
\end{proof}

\vspace{.5cm}

\end{document}